\numberwithin{equation}{section}
\newtheorem{prop}{Proposition}
\newtheorem{lemma}[prop]{Lemma}
\newtheorem{thm}[prop]{Theorem}
\numberwithin{prop}{section}
\theoremstyle{definition}
\newtheorem{defn}[prop]{Definition}
\newtheorem{rmk}[prop]{Remark}
\newcommand{\del}{\partial}
\newcommand{\delb}{\bar{\partial}}\newcommand{\dt}{\frac{\partial}{\partial t}}
\newcommand{\brs}[1]{\left| #1 \right|}
\renewcommand{\gg}{\gamma}
\newcommand{\gD}{\Delta}
\newcommand{\gd}{\delta}
\newcommand{\gs}{\sigma}
\newcommand{\gL}{\Lambda}
\newcommand{\gl}{\lambda}
\newcommand{\gw}{\omega}
\newcommand{\ga}{\alpha}
\newcommand{\gb}{\beta}
\renewcommand{\ge}{\epsilon}
\newcommand{\PP}{\mathcal P}
\newcommand{\til}[1]{\widetilde{#1}}
\renewcommand{\bar}[1]{\overline{#1}}
\renewcommand{\i}{\sqrt{-1}}
\newcommand{\ubar}[1]{\underline{#1}}
\newcommand{\bj}{\bar{j}}
\newcommand{\bw}{\bar{w}}
\newcommand{\bz}{\bar{z}}
\newcommand{\HH}{\mathcal{H}}
\DeclareMathOperator{\Rc}{Rc}
\DeclareMathOperator{\Id}{Id}
\DeclareMathOperator{\End}{End}
\DeclareMathOperator{\USC}{USC}
\DeclareMathOperator{\LSC}{LSC}
\begin{document}

\title[Global viscosity solutions of generalized K\"ahler-Ricci flow]{Global
viscosity solutions of generalized K\"ahler-Ricci flow}

\begin{abstract} We apply ideas from viscosity theory to establish the existence
of a unique global weak solution to the generalized K\"ahler-Ricci flow in the
setting of commuting complex structures.  Our results are restricted to the case
of a smooth manifold with smooth background data.  We discuss the possibility of
extending these results to more singular settings, pointing out a key error in
the existing literature on viscosity solutions to complex Monge-Ampere equations/K\"ahler-Ricci flow.
\end{abstract}

\author{Jeffrey Streets}
\address{Rowland Hall\\
         University of California, Irvine\\
         Irvine, CA 92617}
\email{\href{mailto:jstreets@uci.edu}{jstreets@uci.edu}}
\thanks{J. Streets gratefully acknowledges support from the NSF via
DMS-1454854, and from an Alfred P. Sloan Fellowship.}

\date{October 6th, 2016}

\maketitle

\section{Introduction}

Generalized K\"ahler geometry and generalized Calabi-Yau structures arose
from research on supersymmetric sigma models \cite{Gates}.  They were
rediscovered by Hitchin \cite{Hitchin}, growing out of investigations into
natural 
volume functionals on differential forms.  These points of view were connected
in the thesis of Gualtieri \cite{Gualtieri}.  These
structures have recently
attracted enormous interest in both the physics and mathematical communities as
natural generalizations of K\"ahler Calabi-Yau structures, inheriting a rich
physical and geometric theory.  The author and Tian \cite{STGK} developed a
natural notion of Ricci flow in generalized K\"ahler
geometry, and we will call this
flow \emph{generalized K\"ahler-Ricci flow} (GKRF).  
Explicitly it takes the form
\begin{gather} \label{GKRF}
 \begin{split}
 \dt g = - 2 \Rc^g + \frac{1}{2} \HH, \qquad \dt H = \gD_d H,\\
\dt I = L_{\theta_I^{\sharp}} I, \qquad \dt J = L_{\theta_J^{\sharp}} J,
\end{split}
\end{gather}
where $\HH_{ij} = H_{ipq} H_j^{pq}$, and $\theta_I, \theta_J$ are the Lee forms
of the corresponding Hermitian structures.

A special case of this flow arises when $[J_A,J_B] = 0$, a condition preserved
by the flow \cite{SPCFSTB}, and moreover causes the complex structures to be
fixed along the flow.  As shown in \cite{SPCFSTB}, the GKRF reduces to a single parabolic scalar
PDE in this setting.  We recall that,
suppressing all background geometry terms, the K\"ahler-Ricci flow is known to
reduce locally to the parabolic complex Monge-Ampere equation.  In the present
setting, the local reduction is to the parabolic complex ``twisted''
Monge-Ampere equation.  Namely, one has a splitting $\mathbb C^n = \mathbb C^k
\times \mathbb C^l$, and we denote $z = (z_1,\dots,z_n) = (z_+, z_-)$ where $z_+
\in \mathbb C^k, z_- \in \mathbb C^l$.  Then the ``twisted''\footnote{The
terminology `twisted Monge Ampere' appears in other places in the literature
often referring to a usual Monge Ampere equation modified by some further terms
involving specialized background geometry.  Despite this clash we will use this
terminology as it seems to economically capture the notion that the equation
exploits a nonstandard combination of Monge-Ampere operators.} equation is
\begin{align*}
\dt u =&\ \log \frac{ \det \i \del_+ \delb_+ u}{\det (- \i \del_- \delb_- u)}.
\end{align*}
As observed in \cite{SW}, this equation is formally related to the parabolic
complex Monge Ampere equation via partial Legendre transformation in the $z_-$
variables.  This observation was exploited to establish a $C^{2,\ga}$ estimate
of Evans-Krylov type for this equation, overcoming the nonconvexity which
prevents applying standard machinery.  This estimate can be combined with
further global a priori estimates which hold in specific geometric/topological
situations \cite{SPCFSTB,SBIPCF} to establish global existence and convergence
results for the GKRF.

Despite the partial results and natural estimates which have been
established for this flow, a full regularity theory is lacking due to the lack
of general a priori estimates on the parabolicity of the equation.  Here again
the
nonconvexity of the equation causes difficulty  as the potential function alone
cannot be added to test functions to apply the maximum principle as in the
traditional Monge-Ampere theory \cite{Yau}.  For this reason it is natural to pursue
alternative methods
for establishing low order estimates, and here we look to viscosity theory.  Our main
result establishes the existence of such solutions.  In the statement below
$\tau^*$ is the maximal possible smooth existence time based on cohomological
obstructions (see Definition \ref{GKtaustardef}).

\begin{thm} \label{mainthm} Let $(M^{2n}, g, J_A, J_B)$ be a generalized
K\"ahler manifold with $[J_A,J_B] = 0$.  There exists a unique maximal viscosity
solution to
GKRF on $[0,\tau^*)$, realized as the supremum of all subsolutions.
\end{thm}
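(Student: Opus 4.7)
The plan is to adapt Perron's method to the scalar reduction of the GKRF established in \cite{SPCFSTB}, where for $[J_A,J_B]=0$ the tensor flow reduces to a parabolic scalar PDE for a potential whose local form is the twisted Monge--Amp\`ere equation displayed above. The first step is to fix a precise notion of viscosity sub/supersolution on $M\times[0,\tau^*)$ compatible with this reduction: an upper semicontinuous function $u$ is a subsolution if at every $C^2$ test function $\varphi$ touching $u$ from above the partial Hessians satisfy $\i\del_+\delb_+\varphi > 0$ and $-\i\del_-\delb_-\varphi > 0$, together with
\begin{equation*}
 \dt \varphi \leq \log \frac{\det \i\del_+\delb_+\varphi}{\det(-\i\del_-\delb_-\varphi)};
\end{equation*}
when the partial positivity fails the inequality is declared vacuous, consistent with assigning $-\infty$ to the operator. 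Supersolutions are defined dually with lower semicontinuous envelopes.

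Next I would construct smooth global barriers on $[0,\tau^*)$. The cohomological definition of $\tau^*$ (Definition \ref{GKtaustardef}) yields a smooth reference flow, and subtracting/adding a sufficiently large affine function of $t$ produces a classical subsolution $\underline{u}$ and supersolution $\bar{u}$ agreeing with the initial data and satisfying $\underline{u}\leq \bar{u}$ on each closed subinterval $[0,T]\subset[0,\tau^*)$. Set
\begin{equation*}
 u(x,t) = \sup\{ v(x,t) : v \text{ a viscosity subsolution with } \underline{u} \leq v \leq \bar{u} \}.
\end{equation*}
The standard sup-closure argument, combined with the fact that the twisted Monge--Amp\`ere operator is monotone in each of the two partial Hessians separately, shows the upper semicontinuous envelope $u^*$ is a subsolution, so $u^*=u$ and maximality/uniqueness of the maximal solution follow tautologically from the supremum construction.

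The main obstacle is the supersolution step: showing that the lower semicontinuous envelope $u_*$ is a supersolution. The standard contradiction argument perturbs $u^*$ upward where $u_*$ fails, and this requires a one-sided comparison fact that, as the abstract flags, has been mishandled in the prior literature precisely because of the nonconvexity of the twisted operator. I would handle this by performing the local bump only against a smooth competitor --- either the barrier $\bar{u}$ or a solution produced by the classical short-time theory --- so that the contradiction is drawn between a viscosity subsolution and a smooth function, rather than between two viscosity objects. The needed one-sided comparison is then a direct application of the parabolic maximum principle in the open region where the smooth competitor's partial Hessians have the required definite signs, and the technical heart of the proof is verifying that this region is large enough and that the bump can be accommodated within it so that the Perron contradiction triggers.
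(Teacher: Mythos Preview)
Your overall Perron architecture---smooth barriers from the cohomological $\tau^*$, supremum of subsolutions between them, bump argument for the supersolution property---matches the paper's. But there is a real gap in how you set up the viscosity notions, and it is precisely where the nonconvexity bites.

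You require \emph{both} $\i\del_+\delb_+\varphi>0$ and $-\i\del_-\delb_-\varphi>0$ of the test function and declare the subsolution condition vacuous otherwise. The paper does not do this: in its subsolution inequality (Definition~\ref{VTMAdef}) the positive-part cutoff is placed only on the $z_-$ factor, so that
\[
(\gw_+ + \i\del_+\delb_+\phi)^k\wedge\mu_- \ \ge\ e^{\phi_t+F}\bigl[(\gw_- - \i\del_-\delb_-\phi)_+\bigr]^l\wedge\mu_+
\]
remains a nontrivial constraint (namely $\ge 0$) even when $\gw_- - \i\del_-\delb_-\phi$ is not positive; dually for supersolutions the cutoff sits on the $z_+$ side. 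This asymmetry is what forces subsolutions to be plurisubharmonic in $z_+$ automatically (Lemma~\ref{ellipticitylemma}), a property stable under supremum, while the $z_-$ plurisuperharmonicity is carried by supersolutions alone and is recovered only \emph{a posteriori} once the Perron envelope is shown to be both. With your symmetric ``vacuous'' convention, subsolutions need not be psh in $z_+$, the class of subsolutions is too large, and the resulting envelope need not be elliptic. The paper flags exactly this point (item (2) of the Remark after Theorem~\ref{mainthm} and the discussion following Definition~\ref{VTMAdef}): splitting the two halves of the ellipticity condition between sub- and supersolutions is the device that makes Perron's method go through for this nonconvex operator.

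Separately, you overstate what the supersolution step requires. No comparison principle---even one-sided against a smooth competitor---enters the bump. If $u_*$ is touched from below at $(z_0,t_0)$ by $\phi$ with the supersolution inequality strictly violated, then by continuity $\phi_{\gg,\gd}=\phi+\gd-\gg\brs{z}^2$ is itself a classical (hence viscosity) subsolution on a small ball, and $\max(u,\phi_{\gg,\gd})$ is a subsolution strictly exceeding $u$ near $(z_0,t_0)$, contradicting maximality directly. The one-sided comparison against a smooth supersolution (Lemma~\ref{weakcomparison}) is used only to guarantee that every bounded subsolution already sits below the barrier $\bar u$, so that the constrained and unconstrained suprema agree.
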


\begin{rmk} 
\begin{enumerate}
\item In the work \cite{AFS} a general theory of viscosity solutions is
developed for equations on Riemannian manifolds.  They require adapting the
``variable-doubling method'' \emph{globally} on $M$,
which forces
the use of the global distance function.  The analytic details require some
convexity properties for the distance function, which are only satisfied under
strong curvature hypotheses such as nonnegative sectional curvature. 
\item A remarkable feature of the viscosity theory for the complex Monge Ampere
equation is that the traditional definition of viscosity subsolution naturally
picks out \emph{elliptic} subsolutions (cf. \cite{CMAV} Proposition 1.3)  For
instance, the function $\brs{z_1}^2 - \brs{z_2}^2 -  \brs{z_3}^2$ is \emph{not}
a viscosity solution of $\det \i \del\delb u = 1$ on $\mathbb C^3$.  This is
related to the simple but important observation that the supremum of two
subsolutions is again a subsolution.  This presents an extra challenge due to
the natural mixed plurisub/superharmonic condition needed for ellipticity of the
twisted equation, which for instance is not preserved under taking supremums. 
These issues are overcome by making careful definitions of sub/supersolutions
which naturally split up the two pieces of the ellipticity condition so that
part is satisfied by subsolutions, part by supersolutions.
\item While it is satisfying to construct
a global solution with some (very weak) regularity, it is of course unsatisfying
because ultimately we expect the solution to be smooth, and it is unclear if the
viscosity approach can eventually lead to the full regularity.  Viscosity theory
holds the promise to understand generalized K\"ahler-Ricci flow, perhaps for instance flowing through singularities.  This is the
approach taken in a series of works based on \cite{CMAV, CMAF2} in efforts to
better understand the complex Monge-Ampere equation/K\"ahler-Ricci flow in singular settings. 
In the course of the author's investigations into these works a crucial error
was discovered which renders those works and many subsequent works incomplete. 
This is explained in \S \ref{globalsec}.  Despite these errors in the 
proofs it still seems likely that the statements ultimately are true, although
we were unsuccessful in attempting to repair the existing approach.
\end{enumerate}
\end{rmk}

\subsection*{Acknowledgements:} The author would like to thank Patrick Guidotti
for helpful conversations on viscosity theory.

\section{Smooth Twisted Monge Ampere Flows} \label{background}

In this section we recall and refine the discussion in \cite{SPCFSTB} wherein
the
pluriclosed flow in the setting of
generalized K\"ahler geometry with commuting complex structures is reduced to a
fully nonlinear parabolic PDE.  First we recall the fundamental aspects of the
relevant differential geometry.

\subsection{Tangent bundle splitting} 

Let $(M^{2n}, g, J_A, J_B)$ be a generalized K\"ahler manifold satisfying
$[J_A,J_B] = 0$.  Define
\begin{align*}
 \Pi := J_A J_B \in \End(TM).
\end{align*}
It follows that $\Pi^2 = \Id$, and $\Pi$ is $g$-orthogonal, hence $\Pi$ defines
a $g$-orthogonal decomposition into its $\pm 1$ eigenspaces, which we denote
\begin{align*}
 TM = T_+ M \oplus T_- M.
\end{align*}
Moreover, on the complex manifold $(M^{2n}, J_A)$ we can similarly decompose the
complexified tangent bundle $T_{\mathbb C}^{1,0}$.  For notational simplicity we
denote
\begin{align*}
 T_{\pm}^{1,0} := \ker \left( \Pi \mp I \right) : T^{1,0}_{\mathbb C}
(M, J_A) \to T^{1,0}_{\mathbb C} (M, J_A).
\end{align*}
We use similar notation to denote the pieces of the complex cotangent bundle. 
Other tensor bundles inherit similar decompositions.  The one of most importance
to us is
\begin{align*}
 \Lambda^{1,1}_{\mathbb C}(M, J_A) =&\ \left(\Lambda^{1,0}_+ \oplus
\Lambda^{1,0}_- \right) \wedge \left(\Lambda^{0,1}_+ \oplus \Lambda^{0,1}_-
\right)\\
=&\ \left[\Lambda^{1,0}_+ \wedge \Lambda_+^{0,1} \right] \oplus
\left[\Lambda^{1,0}_+ \wedge \Lambda_-^{0,1} \right] \oplus
\left[\Lambda^{1,0}_- \wedge \Lambda_+^{0,1} \right] \oplus
\left[\Lambda^{1,0}_- \wedge \Lambda_-^{0,1} \right].
\end{align*}
Given $\mu \in \Lambda^{1,1}_{\mathbb C}(M, J_A)$ we will denote this
decomposition as
\begin{align} \label{oneoneproj}
 \mu := \mu^+ + \mu^{\pm} + \mu^{\mp} + \mu^-.
\end{align}
These decompositions allow us to decompose differential operators as well.  In
particular we can express 
\begin{align*}
d = d_+ + d_-, \qquad \del = \del_+ + \del_-, \qquad \delb = \delb_+ + \delb_-.
\end{align*}
The crucial differential operator governing the local generality of generalized
K\"ahler metrics in this setting is
\begin{align*}
\square := \i \left( \del_+ \delb_+ - \del_- \delb_- \right).
\end{align*}

\subsection{A characteristic class}

\begin{defn} \label{chidef} Let $(M^{2n}, J_{A}, J_B)$ be a bicomplex manifold
such that $[J_A,J_B] = 0$.  Let
\begin{align*}
\chi(J_A,J_B) = c^+_1(T^{1,0}_+) - c^-_1(T^{1,0}_{+}) +
c_1^-(T^{1,0}_{-}) - c_1^+(T^{1,0}_{-}).
\end{align*}
The meaning of this formula is the following: fix Hermitian metrics $h_{\pm}$ on
the holomorphic line bundles $\det T^{1,0}_{\pm}$, and use these to define
elements of $c_1(T^{1,0}_{\pm})$, and then project according to the
decomposition (\ref{oneoneproj}).  In particular, given such metrics $h_{\pm}$
we let $\rho(h_{\pm})$ denote the associated representatives of
$c_1(T_{\pm}^{1,0})$, and then let
\begin{align*}
 \chi(h_{\pm}) = \rho^+(h_+) - \rho^{-}(h_+) + \rho^-(h_-) - \rho^+(h_-).
\end{align*}
This definition yields a well-defined class
in a certain cohomology group, defined in \cite{SPCFSTB}, which we now describe.
\end{defn}

\begin{defn} Let $(M^{2n}, J_A, J_B)$ be a bicomplex manifold with $[J_A,J_B]
= 0$.  Given $\zeta_A \in \Lambda^{1,1}_{J_A,\mathbb R}$, let $\zeta_B = -
\zeta_A(
\Pi
\cdot, \cdot) \in \Lambda^{1,1}_{J_B,\mathbb R}$.  We say that $\zeta_A$ is
\emph{formally generalized K\"ahler} if
\begin{gather} \label{FGK}
\begin{split}
d^c_{J_A} \zeta_A =&\ - d^c_{J_B} \zeta_B,\\
d d^c_{J_A} \zeta_A =&\ 0.
\end{split}
\end{gather}
\end{defn}

This definition captures every aspect of a generalized K\"ahler metric compatible with $J_A, J_B$, except for being positive definite.  As we will show in Lemma \ref{localddbar} below, such forms are locally expressed as $\square f$.  It is therefore natural to define the following cohomology space.

\begin{defn} Let $(M^{2n}, J_A, J_B)$ denote a bicomplex manifold
such that $[J_A,J_B] = 0$.  Let
\begin{align*}
H^{1,1}_{GK} := \frac{ \left\{ \zeta_A \in \Lambda^{1,1}_{J_A, \mathbb R}\ |  \
\zeta_A 
\mbox{ satisfies } (\ref{FGK}) \right\}}{ \left\{ \square f \ | \ f \in
C^{\infty}(M) \right\}}.
\end{align*}
\end{defn}

It follows from direct calculations using the transgression formula for $c_1$ (cf. \cite{SPCFSTB}) that $\chi$ yields a well-defined class in $H^{1,1}_{GK}$.

\subsection{Pluriclosed flow in commuting generalized K\"ahler geometry}

With this setup we describe how to reduce pluriclosed flow to a scalar PDE in
the
setting of commuting generalized K\"ahler manifolds.  First we recall that it
follows from (\cite{SPCFSTB} Proposition 3.2, Lemma 3.4)
that the pluriclosed flow in this setting reduces to
\begin{align} \label{PCFGK}
 \dt \gw =&\ - \chi(\gw_{\pm}).
\end{align}
To capture the idea of the formal maximal existence time, we first define the analogous notion to the
``K\"ahler cone,'' which we refer to as $\mathcal P$, the ``positive cone:"
\begin{defn}
 Let $(M^{2n}, g, J_A, J_B)$ denote a bicomplex manifold
such that $[J_A,J_B] = 0$.  Let
\begin{align*}
 \mathcal P := \left\{ [\zeta] \in H^{1,1}_{GK}\ |\ \exists\ \gw \in [\zeta],
\gw > 0
\right\}.
\end{align*}
\end{defn}
From the discussion above, we thus see that a solution to (\ref{PCFGK})
induces a solution to an ODE in $\mathcal P$, namely
\begin{align*}
 [\gw_t] = [\gw_0] - t \chi.
\end{align*}
It is clear now that there is a formal obstruction to the maximal smooth
existence time of the flow.
this setting.
\begin{defn} \label{GKtaustardef} Given $(M^{2n}, g, J_A, J_B)$ a generalized
K\"ahler manifold with $[J_A,J_B] = 0$, let
\begin{align*}
 \tau^*(g) := \sup \left\{ t \geq 0\ |\ [\gw] - t \chi \in \mathcal P \right\}.
\end{align*}
\end{defn}
Now fix $\tau < \tau^*$, so that by hypothesis if we fix arbitrary metrics
$\til{h}_{\pm}$ on $T^{1,0}_{\pm}$, there exists $a \in C^{\infty}(M)$ such that
\begin{align*}
 \gw_0 - \tau \chi(\til{h}_{\pm}) + \square a > 0.
\end{align*}
Now set $h_{\pm} = e^{\pm \frac{a}{2 \tau}} \til{h}_{\pm}$.  Thus $\gw_0 - \tau
\chi(h_{\pm}) > 0$, and by convexity it follows that
\begin{align*}
 \hat{\gw}_t := \gw_0 - t \chi(h_{\pm}) > 0
\end{align*}
is a smooth one-parameter family of generalized K\"ahler metrics.  Furthermore,
given a function $f \in C^{\infty}(M)$, let
\begin{align*}
 \gw^f := \hat{\gw} + \square f,
\end{align*}
with $g^f$ the associated Hermitian metric.  Now suppose that $u$ satisfies
\begin{align} \label{scalarPCF2}
 \dt u =&\ \log \frac{ (\gw^u_+)^k \wedge (\zeta_-)^l}{ (\zeta_+)^l \wedge
(\gw^u_-)^l},
\end{align}
where $\zeta$ denotes the K\"ahler form of the Hermitian metric $h$.  An
elementary calculation using the transgression formula for the first Chern class
(\cite{SPCFSTB} Lemma 3.4) yields that $\gw_u$ solves (\ref{PCFGK}).

\subsection{Twisted Monge-Ampere flows}

We now codify the discussion of the previous subsection by making some general
definitions, and then use these to define our notion of viscosity
sub/supersolutions.

\begin{defn} \label{TMAdef} Let $(M^{2n}, g, J_A, J_B)$ be a generalized
K\"ahler manifold with $[J_A, J_B] = 0$.  Fix
\begin{enumerate}
\item $\gw$ a continuous family of formally generalized K\"ahler  forms.
\item $0 \leq \mu_+(z,t) \in C^0(M, \gL_+^{k,k}),  0 \leq \mu_-(z,t) \in C^0(M,
\gL_-^{l,l})$ continuous families of partial volume forms.
\item $F : M \times [0,T) \times \to \mathbb R$ a continuous function.
\end{enumerate}
A function $u \in C^2(M \times [0,T))$ is a solution of \emph{$(\gw,\mu_{\pm},
F)$-twisted Monge Ampere flow} if
\begin{enumerate}
\item $\gw_{u_t} > 0$ for all $t \in [0,T)$.
\item $(\gw_+ + \i \del_+ \delb_+ u)^k \wedge \mu_- = e^{u_t + F(x,t)} (\gw_- -
\i \del_- \delb_- u)^l \wedge \mu_+$.
\end{enumerate}
\end{defn}

\begin{defn} \label{VTMAdef} Let $(M^{2n}, g, J_A, J_B)$ be a generalized
K\"ahler manifold with $[J_A, J_B] = 0$.  Fix data $(\gw,\mu_{\pm},F)$ as in
Definition \ref{TMAdef}.
A function $u \in \USC(M \times [0,T))$ is a \emph{viscosity subsolution of
$(\gw,\mu_{\pm}, F)$-twisted Monge Ampere flow} if
for all $\phi \in C^{\infty}(M \times [0,T))$ such that $u -
\phi$ has a local
maximum at $(z,t) \in M \times (0,T)$, one has that, at $(z,t)$,
\begin{align*}
(\gw_+ + \i \del_+ \delb_+ \phi)^k \wedge \mu_- \geq&\ e^{\phi_t + F(x,t)}
\left[ (\gw_- - \i \del_- \delb_-
\phi)_+ \right]^l \wedge \mu_+,
\end{align*}
where for a section $\zeta \in \Lambda^{1,1}_-$ the notation $\zeta_+^k$
means $\zeta^l$ if $\eta > 0$ and zero otherwise.

Likewise, a function $v \in \LSC(M \times [0,T))$ is a \emph{viscosity
supersolution of $(\gw,\mu_{\pm}, F)$-twisted Monge Ampere flow}
if for all $\phi \in C^{\infty}(M \times [0,T))$ such that $v -
\phi$ has a local
minimum at $(z,t) \in M \times (0,T)$, one has that, at $(z,t)$,
\begin{align*}
\left[ (\gw_+ + \i \del_+ \delb_+ \phi)_+ \right]^k \wedge \mu_- \leq&\ e^{\phi_t + F(x,t)}
\left(\gw_- - \i \del_-
\delb_- \phi \right)^l \wedge \mu_+,
\end{align*}
where for a section $\eta \in \Lambda^{1,1}_+$ the notation $\eta_+^k$
means $\eta^k$ if $\eta > 0$ and zero otherwise.
\end{defn}

A remarkable feature of the viscosity
theory for complex
Monge-Ampere equations is that it naturally selects elliptic solutions to the
problem.  In a sense it is forced upon the solutions through the use of the
projection operators onto the positive part of the complex Hessian of the test
functions, and the fact that the inequality must hold for arbitrary test
functions, as explained in (\cite{CMAV} Proposition 1.3).  In
our case the notion of ellipticity is more delicate, and
yet the viscosity theory still allows us to set up our definitions so as to
ensure we obtain elliptic solutions to the problem.  This is surprising due to
the nonconvexity of the equation at hand.  

Even further, the Perron process,
which involves
taking supremums of subsolutions, natually preserves the plurisubharmonicity
of subsolutions in the $z_+$ directions, but would not preserve the
plurisuperharmonicity in the $z_-$ directions if we
attempted to impose this by hand.  Only a fortiori, having constructed a
sub/supersolution at the end of the Perron process, do we ensure that our final
solution is parabolic.  We clarify this in the rest of the subsection.  The first step is to exhibit a local version of the $\del\delb$-lemma adapted to this setting.  This result is stated in
\cite{Gates} without proof, which is however elementary.

\begin{lemma} \label{localddbar} Let $\gw = \gw_+ + \gw_-$ be formally
generalized K\"ahler on $U
\subset \mathbb C^k \times \mathbb C^l$.  There exists $f \in C^{\infty}(U)$
such that $\gw = \square f$.
\begin{proof} First observe that since $d_+ \gw_+ = 0$, on each $w \equiv
\mbox{const}$ complex $k$-plane we can apply the $\del\delb$-lemma to obtain a
function $\psi_+(z)$ such that $\i \del_+ \delb_+ \psi_+ = \gw_+$ on that plane.
Since $\gw_+$ is smooth, we can moreover choose these on each slice so that the
resulting function
$\psi_+(z,w)$ is smooth, and satisfies $\i \del_+ \delb_+ \psi_+ = \gw_+$ on
$U$. 
Arguing similarly we obtain a function $\psi_-$ such that $\i \del_- \delb_-
\psi_- = \gw_-$ everywhere on $U$.  We note now that the fact that $\gw$ is
pluriclosed implies that
\begin{align*}
0 =&\ \i \del_+ \delb_+ \gw_- + \i \del_- \delb_- \gw_+ = - \del_+ \delb_+
\del_- \delb_- \left(\psi_+ + \psi_- \right).
\end{align*}
We next claim that any element in the kernel of the operator $\del_+ \delb_+
\del_- \delb_-$, in particular $\psi_+ + \psi_-$, can be expressed as
\begin{align} \label{localddbar10}
\psi_+ + \psi_- =&\ \gl_1(z,\bz,w) + \bar{\gl}_1(z,\bz,\bw) + \gl_2(w,\bw,z) +
\bar{\gl}_2(w,\bw,\bz).
\end{align}
To see this we first note that if $\phi := \psi_+ + \psi_-$ satisfies $\del_+
\delb_+ \del_- \delb_- \phi = 0$, then $\del_- \delb_- \phi$ can be expressed as
the
real part of a $\delb_+$-holomorphic function, so $(\del_- \delb_- \phi)_{\gw_i
\bw_j}
 = \mu^{i\bj}_1(w,\bw,z) + \bar{\mu}^{i\bj}_1(w,\bw,\bar{z})$, where the
indices on the $\mu$ refer to the fact that each component of the
$\del_-\delb_-$-Hessian can be expressed this way.  It follows that $\gD_- \phi
:=
\i \phi_{,w_i \bw_i}$ is the real part of a $\delb_+$-holomorphic function. 
Applying
the Green's function on each $z$-slice it follows that $\phi$ can be expressed
as
the real part of a $\delb_+$-holomorphic function, up to the addition of an
arbitrary $\delb_-$-holomorphic function.  Thus (\ref{localddbar10}) follows.

We claim that $f = \psi_+ - \gl_2 - \bar{\gl}_2$ is the required potential
function.  In particular, since $\i \del_+ \delb_+ \left( \gl_2 + \bar{\gl}_2
\right) = 0$ it follows that $\i \del_+ \delb_+ f = \gw_+$.  Also, we compute
using (\ref{localddbar10}),
\begin{align*}
- \i \del_- \delb_- f =&\ - \i \del_- \delb_- \left( - \psi_- + \gl_1 +
\bar{\gl}_1 \right)\\
=&\ \i \del_- \delb_- \psi_-\\
=&\ \gw_-.
\end{align*}
The lemma follows.
\end{proof}
\end{lemma}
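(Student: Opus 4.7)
The target is to find $f\in C^\infty(U)$ with $\i\del_+\delb_+ f=\gw_+$ and $-\i\del_-\delb_- f=\gw_-$ simultaneously, given the formal generalized K\"ahler conditions \eqref{FGK}. Those conditions unpack in $(J_A,J_B)$-bidegree to say that $\gw_+$ is $d$-closed along each $w=\text{const}$ slice and $\gw_-$ is $d$-closed along each $z=\text{const}$ slice, together with a ``pluriclosed'' cross relation that, when written out, becomes $\del_+\delb_+\gw_- + \del_-\delb_-\gw_+ = 0$.

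My plan is to construct $f$ in three steps. First, I would invoke the classical local $\del\delb$-lemma slice-by-slice: on each fixed $w$-slice $\gw_+$ is a closed real $(1,1)$-form on a domain in $\mathbb C^k$, hence admits a smooth potential $\psi_+(\cdot,w)$, and one can choose these with smooth $w$-dependence since $\gw_+$ is smooth in all variables; thus $\i\del_+\delb_+\psi_+=\gw_+$ on all of $U$. Arguing symmetrically, obtain $\psi_-$ with $\i\del_-\delb_-\psi_-=\gw_-$. The function $f=\psi_+$ would already satisfy the first equation, but will in general fail the second, so the issue is to correct $\psi_+$ by something $\del_+\delb_+$-closed which still supplies the right $\del_-\delb_-$ term.

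Second, I would use the pluriclosed identity above to see that $\Phi:=\psi_++\psi_-$ lies in the kernel of the fourth-order operator $\del_+\delb_+\del_-\delb_-$. The key structural lemma is then a decomposition of this kernel: any real-valued $\Phi\in\ker(\del_+\delb_+\del_-\delb_-)$ on a product domain can be written, at least locally, as
\begin{align*}
\Phi(z,\bz,w,\bw) = \gl_1(z,\bz,w)+\bar\gl_1(z,\bz,\bw)+\gl_2(w,\bw,z)+\bar\gl_2(w,\bw,\bz),
\end{align*}
where $\gl_1$ is $\delb_-$-holomorphic in $w$ and $\gl_2$ is $\delb_+$-holomorphic in $z$. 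I would prove this by observing that $\del_-\delb_-\Phi$ is annihilated by $\del_+\delb_+$, so each scalar entry of the $(\del_-\delb_-)$-Hessian is the real part of a $\delb_+$-holomorphic function in $z$; taking the $w$-Laplacian of $\Phi$ and inverting via the Green's function on each $z$-slice shows $\Phi$ itself, up to a $\delb_-$-holomorphic term, is the real part of a $\delb_+$-holomorphic function of $z$, which is the asserted decomposition.

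Third, with this decomposition in hand I would set $f:=\psi_+ - \gl_2 - \bar\gl_2$. Since $\gl_2+\bar\gl_2$ is $\delb_+$-pluriharmonic in $z$, the first equation $\i\del_+\delb_+ f=\gw_+$ is preserved; and using $\psi_-=-\psi_++\gl_1+\bar\gl_1+\gl_2+\bar\gl_2$ together with the $\delb_-$-pluriharmonicity of $\gl_1+\bar\gl_1$ in $w$, a direct computation gives $-\i\del_-\delb_- f = \i\del_-\delb_-\psi_-=\gw_-$. The main obstacle is the kernel characterization in step two: everything else is standard $\del\delb$-lemma bookkeeping, but that decomposition is where the smoothness and the Green's-function argument must be handled carefully to justify that the constructed $\gl_i$ are smooth on all of $U$.
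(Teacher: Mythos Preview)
Your proposal is correct and follows essentially the same approach as the paper: slice-by-slice $\del\delb$-lemma to build $\psi_\pm$, use of the pluriclosed condition to place $\psi_++\psi_-$ in $\ker(\del_+\delb_+\del_-\delb_-)$, the same kernel decomposition proved via the Green's-function argument on $z$-slices, and the identical correction $f=\psi_+-\gl_2-\bar\gl_2$. Even your identification of the kernel decomposition as the delicate step matches the paper's emphasis.
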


\begin{lemma} \label{goodcover} Let $(M^{2n}, g,
J_A, J_B)$ be a generalized
K\"ahler manifold with $[J_A, J_B] = 0$.  Suppose $\gw_t, t \in [0,T]$ is a
one-parameter family of smooth generalized K\"ahler metrics on $M$.  There
exists a locally finite open cover $\mathcal U = \{U_{\gb}\}$ of $M$ such that
\begin{enumerate}
 \item Each $U_{\gb}$ is the domain of a bicomplex coordinate chart.
 \item For each $\gb$ there is a smooth function $f_{\gb} : U_{\gb} \times [0,T]
\to \mathbb R$ such that $\gw = \square f$.
\end{enumerate}
\begin{proof} The existence of local bicomplex coordinates around each point
follows from (\cite{ApostGualt} Theorem 4), and then the existence of a
locally finite cover follows from standard arguments.  At any time $t$ we can
construct a local potential $f$ by Lemma \ref{localddbar}, and it is clear by
the proof of that Lemma that $f$ can be chosen to depend smoothly on $\gw$, and
so the lemma follows.
\end{proof}
\end{lemma}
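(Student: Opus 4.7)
The plan is to decouple the statement into a geometric step (producing bicomplex charts and refining to a locally finite cover) and an analytic step (running the proof of Lemma \ref{localddbar} with continuous dependence on parameters), and then to glue them together.

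First I would invoke (\cite{ApostGualt} Theorem 4) to obtain, around each point $p \in M$, an open neighborhood $V_p$ together with a bicomplex coordinate system identifying $V_p$ with an open subset of $\mathbb C^k \times \mathbb C^l$ in which both $J_A$ and $J_B$ are in standard form and the splitting $TM = T_+ M \oplus T_- M$ matches the factor decomposition. Shrinking each $V_p$ to a polydisk $D_+ \times D_-$, where each factor is a small Euclidean ball, yields charts on which all $\del\delb$-type inversions used in Lemma \ref{localddbar} are available. Since $M$ is a smooth manifold and therefore paracompact, I would then pass to a locally finite refinement $\mathcal U = \{U_\gb\}$ of $\{V_p\}$ in which each $U_\gb$ is still a bicomplex polydisk chart. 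This handles (1) and reduces (2) to a local question on each $U_\gb \times [0,T]$.

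For (2), I would apply Lemma \ref{localddbar} pointwise in $t$ to obtain functions $f_\gb(\cdot, t)$ with $\gw_t = \square f_\gb(\cdot,t)$ on $U_\gb$, and argue that a specific smooth-in-$t$ choice is possible. Each step in the proof of Lemma \ref{localddbar} consists of applying a standard solution operator to $\gw_t$: the partial potentials $\psi_+(\cdot,t)$ and $\psi_-(\cdot,t)$ arise from slicewise inversion of $\i \del_+ \delb_+$ and $\i \del_- \delb_-$ on $D_+$ and $D_-$ respectively; the decomposition (\ref{localddbar10}) is produced by a slicewise application of the Green's function for the Laplacian on $D_-$ followed by taking harmonic conjugates; and the final potential $f_\gb = \psi_+ - \gl_2 - \bar{\gl}_2$ is an explicit linear combination of these. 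Fixing once and for all kernel-based representations for these operators (e.g.\ a Bergman-type projection together with an integral solution of $\bar\partial$ on each polydisk) makes the assignment $\gw_t \mapsto f_\gb(\cdot,t)$ a composition of bounded linear operators acting on smoothly-varying data, which then yields joint smoothness of $f_\gb$ in $(z,t)$.

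The main technical obstacle is precisely this last claim: Lemma \ref{localddbar} only provides existence of a potential at each fixed time, and the ambiguity in that potential lies in the kernel of $\square$, which contains a large space of functions. To obtain a \emph{smooth} family I would pin down the choice by fixing a complement: for instance, requiring that $\psi_+$ and $\psi_-$ have vanishing harmonic projection on each slice, and selecting the pieces $\gl_1, \gl_2$ as the unique holomorphic-antiholomorphic decomposition normalized to have mean zero. With these canonical choices, each intermediate function depends smoothly on $\gw_t$ by standard elliptic theory on bounded domains, and the smoothness of $f_\gb$ in $(z,t)$ follows. The remaining verification, that the resulting $f_\gb$ still satisfies $\square f_\gb = \gw_t$, is then identical to the single-time computation carried out in Lemma \ref{localddbar}.
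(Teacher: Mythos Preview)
Your proposal is correct and follows essentially the same route as the paper: invoke \cite{ApostGualt} for bicomplex charts, refine to a locally finite cover by paracompactness, and then apply Lemma~\ref{localddbar} with parameters, arguing that the construction there depends smoothly on $\gw$. The paper compresses the last step into a single sentence (``it is clear by the proof of that Lemma that $f$ can be chosen to depend smoothly on $\gw$''), whereas you spell out how to fix the gauge in $\ker\square$ via canonical kernel-based solution operators so that $\gw_t \mapsto f_\gb(\cdot,t)$ becomes a composition of bounded linear maps on smoothly varying data; this is a legitimate fleshing-out of exactly what the paper has in mind.
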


\begin{lemma} \label{localizedflow}
Let $(M^{2n},
g, J_A, J_B)$ be a generalized
K\"ahler manifold with $[J_A, J_B] = 0$.  Fix data $(\gw,\mu_{\pm},F)$ as in
Definition \ref{TMAdef}, and fix a cover $\mathcal U$ as in Lemma
\ref{goodcover}.  Suppose that on $U_{\gb} \in \mathcal U$ there are continuous
density functions $\zeta_{\pm}$ satisfying
\begin{align*}
\mu_+ =&\ e^{\zeta_+} (\i dz_+^1 \wedge d \bar{z}_+^1) \wedge \dots \wedge (\i
dz_+^k \wedge d \bar{z}_+^k)\\
\mu_- =&\ e^{\zeta_-} (\i dz_-^1 \wedge d \bar{z}_-^1) \wedge \dots \wedge (\i
dz_-^k \wedge d \bar{z}_-^k).
\end{align*}
If $u$ is a subsolution of $(\gw, \mu_{\pm}, F)$-twisted Monge Ampere flow, then
$u_{\gb} := u + f_{\gb}$ is a subsolution of
\begin{align*}
(\i \del_+\delb_+ w)^k \geq e^{w_t - f_t + F(x,t) + \zeta_+ - \zeta_-} (- \i \del_-
\delb_- w)_+^l.
\end{align*}
Likewise, if $v$ is a viscosity supersolution of $(\gw, \mu_{\pm}, F)$-twisted
Monge Ampere flow, then $v_{\gb} := v + f_{\gb}$ is a supersolution of
\begin{align*}
(\i \del_+\delb_+ w)_+^k \leq e^{w_t - f_t + F(x,t) + \zeta_+ - \zeta_-} (- \i \del_-
\delb_- w)^l.
\end{align*}
\begin{proof} This is an immediate consequence of unraveling the definitions.
\end{proof}
\end{lemma}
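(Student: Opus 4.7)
The plan is a direct unraveling of the viscosity definitions, using the local potential $f_{\gb}$ furnished by Lemma \ref{goodcover} to translate the global inequality on $M$ into a local one on $U_{\gb}$. The central observation is that $u_{\gb} - \psi$ attains a local maximum at $(z,t)$ if and only if $u - (\psi - f_{\gb})$ attains a local maximum at $(z,t)$, so smooth test functions $\psi$ for $u_{\gb}$ correspond bijectively to test functions $\phi := \psi - f_{\gb}$ for $u$ via subtraction of the smooth potential.

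First I apply Definition \ref{VTMAdef} to $u$ with the shifted test function $\phi$ at $(z,t)$, yielding
\begin{align*}
(\gw_+ + \i\del_+\delb_+ \phi)^k \wedge \mu_- \geq e^{\phi_t + F(x,t)} \left[(\gw_- - \i\del_-\delb_- \phi)_+\right]^l \wedge \mu_+.
\end{align*}
Invoking $\gw = \square f_{\gb}$, I have $\gw_+ = \i\del_+\delb_+ f_{\gb}$ and $\gw_- = -\i\del_-\delb_- f_{\gb}$, so the $f_{\gb}$-contributions cancel in both complex Hessians, and the inequality collapses to
\begin{align*}
(\i\del_+\delb_+ \psi)^k \wedge \mu_- \geq e^{\psi_t - (f_{\gb})_t + F(x,t)} \left[(-\i\del_-\delb_- \psi)_+\right]^l \wedge \mu_+.
\end{align*}
The positive-part projection transports cleanly because the unprojected real $(1,1)_-$-forms $\gw_- - \i\del_-\delb_- \phi$ and $-\i\del_-\delb_- \psi$ coincide identically, so their positive parts coincide a fortiori.

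Finally, using the representation $\mu_{\pm} = e^{\zeta_{\pm}}$ times the standard Lebesgue top-form on the $\pm$-subfactor, each side of the displayed inequality becomes a scalar multiple of the full Lebesgue top-form on $U_{\gb}$; equating scalars and shuffling the $e^{\zeta_{\pm}}$ factors produces exactly the claimed inequality
\[
 (\i\del_+\delb_+\psi)^k \geq e^{\psi_t - (f_{\gb})_t + F(x,t) + \zeta_+ - \zeta_-}\,(-\i\del_-\delb_-\psi)_+^l.
\]
The supersolution assertion is obtained by running the identical calculation with the inequality reversed and the positive-part projection relocated to the $+$-Hessian. I anticipate no real obstacle: this is a genuine definition chase, as the authors' one-line proof indicates. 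The only points requiring any care are the cancellation remark above for the projection and keeping track of the sign of $(f_{\gb})_t$ appearing on the right-hand side of the localized equation.
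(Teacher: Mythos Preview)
Your proof is correct and is precisely the definition chase the paper's one-line proof alludes to: you correctly identify the bijection of test functions via $\phi = \psi - f_{\gb}$, use $\gw = \square f_{\gb}$ to collapse the Hessians, and then strip off the local density factors $e^{\zeta_{\pm}}$ to pass from the $\Lambda^{n,n}$-valued inequality to the scalar one. There is no meaningful difference in approach.
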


Observe that the inequalities defining
sub/supersolutions in Lemma \ref{localizedflow} are expressed as inequalities of
scalars in the chosen coordinates, whereas the original inequalities of
Definition \ref{VTMAdef} are expressed in terms of sections of $\Lambda^{n,n}$. 
Moreover, the meaning of viscosity sub/supersolution in this context is the
classic one.  As the key arguments in the proofs of the comparison theorems are
local in nature, it suffices to consider this localized version of the flow,
which has the advantage of stripping away much notation and making things more
concrete in coordinates.  We will refer to this setup informally as a
\emph{localized flow}.  Now we are ready to state our ellipticity claim.

\begin{lemma} \label{ellipticitylemma} Local viscosity subsolutions of twisted
Monge-Ampere flow as in Lemma \ref{localizedflow} are plurisubharmonic in the
$z_+$-variables, and
viscosity supersolutions of twisted Monge-Ampere flow as in Lemma
\ref{localizedflow} are plurisuperharmonic in the $z_-$-variables.
\begin{proof} Let $u$ be a local viscosity subsolution of twisted Monge-Ampere
flow.  Without loss of generality we assume the domain is $B_1(0) \times [0,T)$.
 Fix $(z_0,t_0) \in B_1(0) \times [0,T)$ such that $u(z_0)
\neq - \infty$.  Choose a function $\phi \in C^2(B_1(0) \times [0,T))$ such that
$u -
\phi$ has a local maximum at $(z_0,t_0)$.  It follows directly from the
definition of subsolution that 
\begin{align*}
(\i \del_+ \delb_+ \phi)^k \geq e^{\phi_t + F(z_0,t_0)} (- \i \del_- \delb_-
\phi)_+^l \geq 0.
\end{align*}
We claim that $\i \del_+ \delb_+ \phi \geq 0$.  First note that, if we fix a $k
\times k$ Hermitian
positive semidefinite matrix $H_+$, and set
\begin{align*}
 \phi_{H_+}(z,t) := \phi(z,t) + H_+(z_+ - (z_0)_+)(\bar{z}_+ - (\bar{z}_0)_+).
\end{align*}
The function $\phi_{H_+}$ has a local maximum at $(z_0,t_0)$ as well.  Hence
arguing as above we have
\begin{align*}
(\i \del_+ \delb_+ \phi_{H_+})^k = (\i \del_+ \delb_+ \phi + H_+)^k\geq 0.
\end{align*}
Since $H_+$ is arbitrary, by an elementary linear algebra argument this implies
$\i \del _+ \delb_+ \phi \geq 0$.  It then follows that for any positive
definite matrix $H_+$ one has
\begin{align*}
 H_+^{\bj_+ i_+} \frac{\del^2 \phi}{\del z^i \del \bar{z}^j} \geq 0.
\end{align*}
This implies that $u$ is a viscosity subsolution of $\gD_H \phi \geq 0$. Since
$H_+$ is arbitrary, using results from linear elliptic PDE theory
(\cite{Hormander}) as in (\cite{CMAV} Proposition 1.3) it follows that $u$ is
plurisubharmonic in the $z_+$-variables.  The argument for $u$ being
plurisuperharmonic in the $z_-$-variables is directly analogous.
\end{proof}
\end{lemma}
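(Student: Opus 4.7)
The plan is to reduce the claim to a family of linear elliptic inequalities on the $z_+$-variables, and then invoke the classical characterization of viscosity plurisubharmonicity recorded in (\cite{CMAV} Proposition 1.3). Take a subsolution $u$ and a test function $\phi \in C^2$ such that $u - \phi$ attains a local maximum at $(z_0,t_0)$. The defining inequality of Lemma \ref{localizedflow} gives, at that point,
\begin{align*}
(\i \del_+\delb_+ \phi)^k \geq e^{\phi_t - f_t + F + \zeta_+ - \zeta_-} (-\i \del_-\delb_- \phi)_+^l \geq 0.
\end{align*}
This controls the determinant of $\i \del_+\delb_+\phi$ but not its semidefiniteness, which is what must be obtained.

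The key step is to perturb $\phi$ by an arbitrary positive semidefinite Hermitian quadratic in $z_+$ alone, centered at the touching point. The perturbed function $\phi_{H_+}$ still has $u - \phi_{H_+}$ attaining a local maximum at $(z_0,t_0)$, while its $\del_+\delb_+$-Hessian becomes $\i\del_+\delb_+ \phi + H_+$ and its $\del_-\delb_-$-Hessian and time derivative are unchanged. Applying the subsolution inequality to $\phi_{H_+}$ and letting $H_+$ range over PSD matrices yields $(\i\del_+\delb_+ \phi + H_+)^k \geq 0$ for every PSD $H_+$. A short linear algebra argument then forces $\i\del_+\delb_+\phi \geq 0$: if this Hessian had a negative eigenvalue, one could pick $H_+$ supported on the remaining eigenspaces large enough to make the resulting $k$-fold product negative, contradicting the inequality.

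Once semidefiniteness is in hand, I would plug in positive definite $H_+$ and take a trace against $H_+^{-1}$ to deduce that $u$ is a viscosity subsolution of every constant-coefficient linear elliptic operator of the form $H^{\bj i}\del^2/\del z^i \del \bar z^j$ on the $z_+$-slices. Invoking (\cite{CMAV} Proposition 1.3), which recasts H\"ormander's linear PDE characterization of plurisubharmonicity in viscosity language, then concludes that $u$ is plurisubharmonic in the $z_+$-variables. The supersolution claim follows by a formally dual argument, perturbing instead by a negative PSD quadratic in $z_-$ alone. The main delicacy I expect is to verify that the $z_+$-perturbation leaves the right-hand side of the subsolution inequality untouched — in particular that the positive-part projection $(-\i\del_-\delb_-\phi)_+^l$ is unchanged — but this is automatic because the added quadratic depends only on $z_+$, so the $z_-$-Hessian of $\phi_{H_+}$ equals that of $\phi$.
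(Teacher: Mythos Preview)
Your proposal is correct and follows essentially the same approach as the paper's own proof: perturb the test function by a positive semidefinite Hermitian quadratic in $z_+$ centered at the touching point, deduce $(\i\del_+\delb_+\phi + H_+)^k \geq 0$ for all such $H_+$, conclude $\i\del_+\delb_+\phi \geq 0$ by elementary linear algebra, and then invoke (\cite{CMAV} Proposition~1.3) via H\"ormander's characterization. Your write-up is in fact slightly more careful than the paper's in explicitly noting that the $z_-$-Hessian and the time derivative of the test function are unaffected by the perturbation, and in sketching the linear algebra step.
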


\section{Proof of Theorem \ref{mainthm}}

\begin{lemma} \label{weakcomparison} Let $(M^{2n}, g, J_A, J_B)$ be a generalized K\"ahler manifold
with $[J_A,J_B] = 0$.  Fix data $(\gw,\mu_{\pm},F)$ as in Definition
\ref{TMAdef}.  Suppose $\ubar{u}$ is a bounded viscosity subsolution of
$(\gw,\mu_{\pm},F)$-twisted Monge-Ampere flow, and suppose $\bar{u}$ is a smooth
supersolution of $(\gw,\mu_{\pm},F)$-twisted Monge-Ampere flow.  If $\bar{u}(x,0)
\geq \ubar{u}(x,0)$ for all $x \in M$, then $\bar{u}(x,t) \geq \ubar{u}(x,t)$ for all $(x,t)
\in M \times [0,T)$.
\begin{proof} Suppose there exists $(x_0,t_0) \in M \times [0,T)$ such that
$\bar{u}(x_0,t_0) < \ubar{u}(x_0,t_0)$.  It follows directly from the definitions that
for $\gd > 0$, the function
\begin{align*}
\bar{u}_{\gd}(x,t) := \bar{u}(x,t) + \frac{\gd}{T - t}
\end{align*}
is also a smooth supersolution.  Moreover, for $\gd$ chosen sufficiently small
it follows that $\bar{u}_{\gd}(x_0,t_0) < \ubar{u}(x_0,t_0)$.  Since $\ubar{u}$ is bounded and
$\lim_{t \to T} u_{\gd}^*(x,t) = \infty$ for all $x \in M$, it follows that $\ubar{u}
- \bar{u}_{\gd}$ attains a positive maximum at some point $(x_0',t_0')$, $0 < t_0' <
T$.

The function $u_{\gd}^*$ is smooth, and so can be used in the definition of
$\ubar{u}$ being a subsolution to yield, at the point $(x_0',t_0')$, the inequality
\begin{align*}
(\gw_+ + \i \del_+ \delb_+ u_{\gd}^*)^k \wedge \mu_- \geq e^{(\bar{u}_{\gd})_t +
F(x_0',t_0')} (\gw_- - \i \del_- \delb_- \bar{u}_{\gd})_+^l \wedge \mu_+.
\end{align*}
Since $\bar{u}(\cdot,t) \in \PP_{\gw_t}$ for all $t$, we can ignore the projection
operator on the right hand side and apply elementary identities to obtain
\begin{align*}
(\gw_+ + \i \del_+ \delb_+ \bar{u})^k \wedge \mu_- \geq e^{(\bar{u})_t + \gd/(T -
t_0')^2 + F(x_0',t_0')} (\gw_- - \i \del_- \delb_- \bar{u})^l \wedge \mu_+.
\end{align*}
On the other hand, since $\bar{u}$ is already a supersolution and $\bar{u}(\cdot,t) \in
\PP_{\gd_t}$ for all $t$ we have
\begin{align*}
(\gw_+ + \i \del_+ \delb_+ \bar{u})^k \wedge \mu_- \leq e^{\bar{u}_t + F(x_0',t_0')}
(\gw_- - \i \del_- \delb_- \bar{u})^l \wedge \mu_+.
\end{align*}
Putting the previous two inequalities together yields
\begin{align*}
e^{\gd/(T - t_0')^2} \leq e^{-u_t^* -  F(x_0',t_0')} \frac{(\gw_+ + \i \del_+
\delb_+ \bar{u})^k \wedge \mu_-}{(\gw_- - \i \del_- \delb_- \bar{u})_+^l \wedge \mu_+}
\leq 1,
\end{align*}
a contradiction.
\end{proof}
\end{lemma}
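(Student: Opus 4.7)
The plan is to argue by contradiction via a standard penalization, exploiting the fact that one of the two functions is smooth so that no spatial variable-doubling is needed --- this sidesteps the geometric difficulties on manifolds noted in the first remark of the introduction. Suppose the conclusion fails at some $(x_0, t_0) \in M \times [0,T)$ with $\ubar{u}(x_0, t_0) > \bar{u}(x_0, t_0)$. First I would introduce the penalized function
\[
\bar{u}_\gd(x,t) := \bar{u}(x,t) + \frac{\gd}{T-t},
\]
and observe that the penalty contributes $\gd/(T-t)^2 > 0$ to the time derivative while leaving the spatial $\del_\pm \delb_\pm$-Hessians unchanged, so $\bar{u}_\gd$ remains a smooth supersolution (in fact strictly so). For $\gd > 0$ small enough, the pointwise strict inequality $\bar{u}_\gd(x_0, t_0) < \ubar{u}(x_0, t_0)$ persists.

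The next step is to promote this pointwise violation to an interior maximum. Because $\ubar{u}$ is bounded on the compact manifold $M$ while $\bar{u}_\gd \to +\infty$ as $t \to T$, the upper semicontinuous function $\ubar{u} - \bar{u}_\gd$ attains its positive maximum at some interior point $(x_0', t_0')$ with $t_0' \in (0,T)$. At this point the smooth function $\bar{u}_\gd$ serves as a valid test function from above for $\ubar{u}$, so the subsolution inequality from Definition \ref{VTMAdef} applies at $(x_0', t_0')$ and reads
\[
(\gw_+ + \i \del_+ \delb_+ \bar{u})^k \wedge \mu_- \;\geq\; e^{\bar{u}_t + \gd/(T - t_0')^2 + F(x_0', t_0')}\,(\gw_- - \i \del_- \delb_- \bar{u})_+^l \wedge \mu_+,
\]
using that $\bar{u}$ and $\bar{u}_\gd$ have the same spatial Hessians.

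Finally I would combine this with the reverse supersolution inequality satisfied by $\bar{u}$ itself, which reads without any positive-part projection on the right since $\bar{u}$ is smooth. Dividing the two inequalities yields $e^{\gd/(T - t_0')^2} \leq 1$, contradicting $\gd > 0$.

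The main obstacle I anticipate is reconciling the two inequalities, since the subsolution side carries the projection $(\gw_- - \i \del_- \delb_- \bar{u})_+^l$ while the supersolution side does not. I would resolve this by appealing to the positivity implicit in the notion of smooth supersolution, namely $\bar{u}(\cdot, t) \in \mathcal{P}_{\gw_t}$ for every $t$ (as in Definition \ref{TMAdef}(1)), which guarantees $\gw_- - \i \del_- \delb_- \bar{u} > 0$ and hence that the projection is the identity on $\bar{u}$. Everything else in the scheme is routine scalar parabolic maximum-principle bookkeeping, so I expect this positivity check to be the only place one needs to work.
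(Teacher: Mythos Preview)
Your proposal is correct and follows essentially the same route as the paper's proof: penalize by $\gd/(T-t)$, locate an interior maximum of $\ubar{u}-\bar{u}_\gd$, use the smooth $\bar{u}_\gd$ as test function in the subsolution condition, and compare with the supersolution inequality for $\bar{u}$, invoking $\bar{u}(\cdot,t)\in\mathcal{P}_{\gw_t}$ to drop the positive-part projections. The only minor slip is your remark that the supersolution inequality ``reads without any positive-part projection on the right since $\bar{u}$ is smooth'': in Definition~\ref{VTMAdef} the supersolution projection sits on the \emph{left} factor $(\gw_+ + \i\del_+\delb_+\phi)_+^k$, so you also need $\gw_+ + \i\del_+\delb_+ \bar{u} > 0$ to match the two left-hand sides---but this is again covered by the same positivity hypothesis $\bar{u}\in\mathcal{P}_{\gw_t}$ you already cite.
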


\begin{proof}[Proof of Theorem \ref{mainthm}] 

We first observe the existence of smooth, bounded sub/supersolutions.  In
particular,
since $g^{u_0}, h$ are smooth metrics, one has that
\begin{align*}
\sup_M \brs{\log \frac{\det g_+^{u_0} \det h_-}{\det h_+ \det g_-^{u_0}}} \leq A.
\end{align*}
It follows immediatley that the smooth functions
\begin{align*}
\bar{u} := u_0 + t A, \qquad \ubar{u} := u - t A,
\end{align*}
are smooth sub/supersolutions to the problem.

Now let
\begin{align*}
u = \sup\{ w\ |\ \ubar{u} \leq w \leq \bar{u}, \ w \mbox{ is a subsolution to } (\gw,
\mu_{\pm},F) \mbox{ twisted MA flow} \}.
\end{align*}
We claim that $u$ is a viscosity solution in the sense that the usc regularization $u^*$ is a subsolution, whereas the lower semicontinuous regularization $u_*$ is a supersolution.  It is a standard argument (cf. \cite{Users}) to show that, as a supremum of subsolutions, $u$ itself is a
subsolution to $(\gw,\mu_{\pm},F)$-twisted Monge Ampere flow.  It follows that in fact $u^* = u$ is a subsolution.  Next we show that $u_*$ is a
supersolution.  If not, there exists $(z_0,t_0) \in M \times [0,T)$ and $\phi$ a
$C^2$ function such that $u_* - \phi$ has a local minimum of zero at $(z_0,t_0)$
and, at that point,
\begin{align*}
\left[ (\gw_+ + \i \del_+ \delb_+ \phi)_+ \right]^k \wedge \mu_- > e^{\phi_t + F(x,t)}
\left(\gw_- - \i \del_-
\delb_- \phi \right)^l \wedge \mu_+.
\end{align*}
Choose coordinates around $z_0$, fix constants $\gg,\gd > 0$ and consider
\begin{align*}
\phi_{\gg,\gd} = \phi + \gd - \gg \brs{z}^2.
\end{align*}
It follows that 
\begin{align*}
\left[ (\gw_+ + \i \del_+ \delb_+ \phi_{\gg,\gd})_+ \right]^k \wedge \mu_- > e^{\phi_t + F(x,t)}
\left(\gw_- - \i \del_-
\delb_- \phi_{\gg,\gd} \right)^l \wedge \mu_+
\end{align*}
on $P_r(z_0)$, for sufficiently small $r > 0$.  If we choose $\gd = (\gg r^2)/8$
then it follows that $u_* > \phi_{\gg,\gd}$ for $r/2 \leq \brs{\brs{z}} \leq r$,
whereas $\phi_{\gg,\gd}(z_0,t_0) > u_*(z_0,t_0) + \gd$.  We now define, supressing
the identification with the given coordinate chart,
\begin{align*}
\Phi(z,t) = \begin{cases}
\max \{ u_*(z,t), \phi_{\gg,\gd}(z,t) \} &\ z \in B_r(z_0)\\
u_*(z,t) &\ \mbox{otherwise}
\end{cases}.
\end{align*}
As the supremum of subsolutions, $\Phi$ is a subsolution to
$(\gw,\mu_{\pm},F)$-twisted Monge Ampere flow.  Now choose a sequence $(z_n,t_n)
\to (z_0,t_0)$ such that $u(z_n,t_n) \to u_*(z_n,t_n)$.  For sufficiently large
$n$ it follows that $\Phi(z_n,t_n) = \phi_{\gg,\gd}(z_n,t_n) > u(z_n,t_n)$,
contradicting the definition of $u$.
\end{proof}

\section{Global comparison principle for singular equations} \label{globalsec}
\subsection{Localization and comparison principles}

The viscosity solution constructed in Theorem \ref{mainthm} is sometimes in the literature referred to as a ``Perron discontinuous viscosity solution."  The function constructed is not even known to be continuous.  Typically what is required to show this is a more general comparison principle, generalizing Lemma \ref{weakcomparison} to the case of an arbitrary (not just smooth) supersolution.  In the case of fully nonlinear second order equations on domains in $R^n$, this is achieved by the ``Jensen-Ishii maximum principle,'' a delicate technique exploiting various properties of $\mathbb R^n$ in an essential way.

While it seems natural that these ideas should extend to the case of equations on manifolds, there seem to be subtle technical issues in making the Jensen-Ishii method work.  While these have been overcome assuming background curvature conditions in \cite{AFS}, recent efforts to overcome these obstacles in the case of the complex Monge Ampere equation appear to be incomplete.  In particular, in this subsection we describe a crucial error in the paper
\cite{CMAV}.  The main results in \cite{CMAV} claim to use ideas from viscosity
theory to establish estimates/existence results for the complex Monge Ampere
equation with singular background measures.  Prior work using the viscosity
method for equations on compact manifolds appeared in \cite{AFS}, where the
background geometry appears explicitly in the argument and as such requires a
condition on the curvature for the method to succeed.  The paper \cite{CMAV}
claims
to get around this by explicitly localizing the proof using cutoff functions and
the pure scalar form of the local PDE.  As discussed above, the
central tool required is a comparison principle for viscosity
sub/supersolutions.  This is Theorem 2.14 in
\cite{CMAV}, and contains a key logical gap the author was unable to repair.

The proof (\cite{CMAV} pages 17-19) mostly follows standard lines which I will
briefly describe, calling attention to the one specific false claim, and then
describing the role it plays in the proof at large.  Call the sub/supersolutions
$w_*, w^*$ respectively.  There
is no boundary so one in general needs to show $w_* \leq w^*$.  The standard
method in viscosity theory is to use `variable doubling' with `penalization' and
consider the function
\begin{align*}
 \Phi_{\ga}(x,y) = w_*(x) - w^*(y) - \frac{1}{2} \ga \brs{x - y}^2.
\end{align*}
This is an upper semicontinuous function whose maximum occurs near the diagonal
for large $\ga$.
 The strategy of \cite{CMAV} in using this method on manifolds is to modify this
by further penalizing with a cutoff function.

In particular, they consider their equation on say a ball of radius $4$.  They
produce a smooth function $\phi_3: M \times M \to \mathbb R$ on page 18 which
satisfies:
\begin{enumerate}
 \item $\phi_3 \geq 0$
 \item $\phi_3^{-1}(0) = \gD \cap \{\phi_2 \leq \eta\}$
 \item $\phi_3|_{M^2 \backslash B(0,2)^2} > 3C$.
\end{enumerate}
Here $\gD$ is the diagonal and the function $\phi_2$ is an arbitrary smooth
function satisfying $\phi_2|_{B(0,1)^2} < -1$ and $\phi_2|_{M^2 \backslash
B(0,2)^2} > C$ for a large constant $C$.  Furthermore $1  >> \eta > 0$ is chosen
so that $-\eta$ is a regular value of $\phi_2$ and $\phi_2|_{\gD}$.  This is not
specified in further detail in \cite{CMAV}, but the key properties used are that
the function $\phi_3$
vanishes along part of the diagonal, say the part contained in $B(0,1)^2$, and
is large away from $B(0,2)^2$.  Now let
\begin{align*}
 \Phi_{\ga}(x,y) = w_*(x) - w^*(y) - \phi_3(x,y) - \frac{1}{2} \ga \brs{x -
y}^2,
\end{align*}
and choose a sequence $(x_{\ga},y_{\ga})$ realizing the supremum of
$\Phi_{\ga}$ as $\ga \to \infty$, and take a limit point $(\hat{x},\hat{y})$. 
A result from \cite{Users}, utilized as (\cite{CMAV} Lemma 2.15), yields
$\hat{x} = \hat{y}, \hat{x} \in \gD \cap \{\phi_2 \leq - \eta
\}$.  In other words, the limit point is on the diagonal, and in the zero set of
$\phi_3$.

Next the authors apply the `Jensen-Ishii maximum principle' (recorded as
\cite{CMAV} Lemma 2.16),
with the penalization function $\phi = \phi_3 + \frac{1}{2} \ga \brs{x - y}^2$
to obtain, for arbitrary $\ge > 0$, test jets $(p_*, X_*), (p^*, X^*)$
satisfying
\begin{align*}
 \left( 
 \begin{matrix}
X_* & 0\\
0 & - X^*
 \end{matrix} \right) \leq A + \ge A^2,
\end{align*}
where $A = D^2 \phi(x_{\ga},y_{\ga})$, i.e.
\begin{align*}
 A = \ga \left(
 \begin{matrix}
  I & - I\\
  - I & I
 \end{matrix} \right) + D^2 \phi_3(x_{\ga},y_{\ga})
\end{align*}
It is crucial to the rest of the proof that
one has $X^* \geq X_* \geq 0$.  In the purely local case where $\phi_3 = 0$ one
chooses $\ge$ appropriately relative to $\ga$ to obtain
\begin{align*}
 \left( 
 \begin{matrix}
X_* & 0\\
0 & - X^*
 \end{matrix} \right) \leq 3 \ga \left(
 \begin{matrix}
  I & - I\\
  - I & I
 \end{matrix} \right),
\end{align*}
which immediately implies the required inequality $X^* \geq X_*$ .  With
$\phi_3$ in place, it is necessary to show that its Hessian is not just small,
but decaying at the rate of $\ga^{-2n}$.  The authors correctly note that,
`...the Taylor series (of $\phi_3$) vanishes up to order $2n$ on $\gD \cap
\{\phi_2
\leq - \eta\}$'.  But then it is claimed that this implies
this implies
\begin{align*}
 D^2 \phi_3(x_{\ga},y_{\ga}) = O(d(x_{\ga},y_{\ga})^{2n}) = o(\ga^{-n}).
\end{align*}
The second equality is trivial since by construction
one easily has $d(x_{\ga},y_{\ga})^2 = o (\ga^{-1})$.  However, the first
equality is false.  This would be true if one could Taylor expand around the
point $(x_{\ga},x_{\ga})$, \emph{assuming $(x_{\ga},x_{\ga}) \in
\gD \cap \{\phi_2 \leq - \eta \}$, in other words, if $\phi_3(x_{\ga},x_{\ga}) =
0$}.  It is clear however that 
construction of $\Phi_{\ga}$ and the corresponding sequence $(x_{\ga},y_{\ga})$
allows for $\phi_3(x_{\ga},x_{\ga}) > 0, \phi(y_{\ga},y_{\ga}) > 0$.
 While it is true that the limit point $(\hat{x},\hat{x})$ satisfies
$\phi_3(\hat{x},\hat{x}) = 0$ it does not follow that it is true along the
sequence.  

This oversight concerns exactly the key difficulty in applying these classic
techniques on manifolds, which is how to `localize'.  Moreover, elementary
arguments
seem to show that any such cutoff function chosen for the role of $\phi_3$
produces an `error term' which cannot be overcome to conclude the crucial
inequality $X^* > 0$.  Hence the question of `is it possible to utilize the
Jensen-Ishii maximum principle on manifolds' remains largely open, other than
the work \cite{AFS}

\subsection{Outlook}

We have shown that the proof of the main comparison principle of \cite{CMAV},
Theorem 2.14, is flawed, rendering the proofs of all of the main results in that
paper incomplete.  One of the main claims of \cite{CMAV}, namely Theorem C which
asserts the continuity of the pluripotential-theoretic solution to some
degenerate Monge Ampere equations with right hand side in $L^p, p > 1$, has been
used in many further works, which now appear incomplete.  In particular,
\cite{BermanCOT} Theorem 1.3, relies on this result and so is incomplete.  We
note that \cite{Fang} presents two proofs of a $C^0$ estimate for the $J$-flow,
one of which relies on (\cite{CMAV} Theorem C), the other of which is direct and complete.  Theorem 5.4 of \cite{Bermangeod} relies on (\cite{CMAV} Theorem C),
although it is difficult to determine how central this is to the main arguments.
 In \cite{
DiGuedj} Proposition 1.4 the authors invoke the arguments of \cite{CMAV}
rendering this proposition, on which much of the paper is based, incomplete. 
Furthermore, the authors of \cite{CMAV} use (\cite{CMAV} Theorem C) to claim
continuous approximation of plurisubharmonic functions in \cite{EGZCont} (cf.
Main Theorem, Corollary, Theorem 2.3, and another invocation of the same flawed
localization technique on page 8).  This claim is used in a central way in the
recent papers (\cite{Darvas} \S 3.2, \cite{LuNguyen} Theorem 2).  

The same localization method has been employed to claim results on the
K\"ahler-Ricci flow as well.  First we note that the results of \cite{CMAF1} are
purely local in nature and do not feature this argument.  However, the flawed
localization result in used in the main comparison result of \cite{CMAF2},
Theorem 2.1, rendering the proofs of all of the main results in that paper
incomplete.   Also, in (\cite{EGZKRF} page 20) the same erroneous localization
technique is applied.

\end{document}